\renewcommand{\hat}{\widehat}
\newcommand{\rbra}[1]{\!\left( #1 \right)} %round brackets or parentheses
\newcommand{\cbra}[1]{\!\left\{ #1 \right\}} %curly brackets or braces
\newcommand{\sbra}[1]{\!\left[ #1 \right]} %square brackets or brackets
\newcommand{\bE}{\ensuremath{\mathbb{E}}}
\newcommand{\bP}{\ensuremath{\mathbb{P}}}
\newcommand{\bR}{\ensuremath{\mathbb{R}}}
\newcommand{\cB}{\ensuremath{\mathcal{B}}}
\newcommand{\cF}{\ensuremath{\mathcal{F}}}
\theoremstyle{plain}
\newtheorem{Thm}{Theorem}[section]
\newtheorem{Lem}[Thm]{Lemma}
\theoremstyle{definition}
\newtheorem{Ass}[Thm]{Assumption}
\newtheorem{Rem}[Thm]{Remark}
\numberwithin{equation}{section}
\renewcommand\section{\@startsection {section}{1}{\z@}%
                                   {-3.5ex \@plus -1ex \@minus -.2ex}%
                                   {2.3ex \@plus.2ex}%
                                   {\normalfont\large\bf}}
\renewcommand\subsection{\@startsection {subsection}{1}{\z@}%
                                   {-3.5ex \@plus -1ex \@minus -.2ex}%
                                   {2.3ex \@plus.2ex}%
                                   {\normalfont\normalsize\bf}}
\begin{document}

\begin{center}
{\Large \bf 
Scale functions of space-time changed processes with no positive jumps 
%and its application to self-similar Markov processes
}
\end{center}
\begin{center}
Kei Noba
\end{center}
%\begin{center}
%{\small \today}
%\end{center}

\begin{abstract}
The scale functions were defined for spectrally negative L\'evy processes and other strong Markov processes with no positive jumps, and have been used to characterize their behavior. 
In particular, I defined the scale functions for standard processes with no positive jumps using the excursion measures in \cite{Nob2020_1}. 
In this paper, we consider a standard process $X$ with no positive jumps and a standard process $Y$ defined by the space-time change of $X$. 
%Based on the results obtained in \cite{NobYam2023+}, 
We express the scale functions of $Y$ using the scale functions of $X$ defined in \cite{Nob2020_1} and the Volterra integral equation. 
%based on the results obtained in \cite{NobYam2023+}, we express the scale functions defined in \cite{Nob2020_1} of the space-time changed process $Y$ of $X$ using the scale functions of a standard process $X$ with no positive jumps and the Volterra integral equation. 
%Here, the scale functions refers to the functions defined using the excursion measures in \cite{Nob2020_1}. 
From this result, we can express the scale functions of some important processes, such as positive or negative self-similar Markov processes with no positive jumps and continuous-state branching processes, using the scale function of spectrally negative L\'evy processes and the Volterra integral equations. 
\end{abstract}

%%%%% text %%%%%
\section{Introduction}
A $1$-dimensional L\'evy process that has no positive jumps and no monotone paths is called a spectrally negative L\'evy process. 
In the theory of spectrally L\'evy processes, the scale functions are known as a powerful tool for characterizing their behavior. 
These functions can represent the Laplace transform of the exit times from intervals of spectrally negative L\'evy processes (see, e.g., \cite[Theorem 1.2]{KuzKypRiv2012}, \cite[Theorem 8.1]{Kyp2014} or Theorem \ref{Thm206} for the case of L\'evy processes).  %and its properties are similar to the scale functions of the diffusion processes, hence the same name. 
Another important property is that the scale functions can characterize the potential densities of spectrally negative L\'evy processes killed on exiting intervals (see, e.g., \cite[Theorem 2.7]{KuzKypRiv2012}, \cite[Theorem 8.7]{Kyp2014} or Theorem \ref{Thm207} for the case of L\'evy processes). 
These properties allow us to obtain various results, such as the characterization of the economic cost at the moment of ruin in the risk theory (see, e.g., \cite[Section 10.1]{Kyp2014}), the derivation of optimal strategies in stochastic control problems (see, e.g., \cite{AvrPalPis2007}), and the characterization of some types of invariant measures (see, e.g., \cite{Ber1997}). 
For the above reasons, many researchers have attempted to define scale functions for other stochastic processes with no positive jumps and apply them as in the case of spectrally negative L\'evy processes. 
In particular, since the scale functions of spectrally negative L\'evy processes can be explicitly expressed using concrete parameters (see, e.g., Section \ref{Sec401}), it is important to express the scale functions of various stochastic processes using the scale functions of spectrally negative L\'evy processes.
\par
The scale functions have been defined for the following stochastic processes using the scale functions or the Laplace exponents of spectrally negative L\'evy processes.
In \cite{KypLoe2010}, the authors defined the scale functions of refracted L\'evy processes. 
As a generalization, the scale functions of level-dependent L\'evy processes were defined as solutions of the Volterra integral equations in \cite{CzaPerRolYam2019}. 
Also in \cite{NobYan2019}, the scale functions were defined for another generalization, generalized refracted L\'evy processes. 
In \cite{Pat2009}, the author defined the functions similar to the scale functions in a special case for positive self-similar Markov processes with no positive jumps.  %(note that the functions in \cite{Pat2009} is slightly different from the other scale functions in this paper). 
In \cite{LiPal2018}, the authors defined the scale functions of omega-killed spectrally negative L\'evy processes as solutions of the Volterra integral equations. 
In addition, for the scale functions of continuous-state branching processes in a special case, there is an idea written in Remark \ref{Rem402} which precedes this paper.  %I want you to pay attention to Remark \ref{Rem402}.
\par
I would like to explain the main result, but before that, 
we focus on the results obtained with \cite{Nob2020_1} and \cite{NobYam2023+}. 
In \cite{Nob2020_1}, I defined the scale functions of standard processes with no positive jumps using the excursion measures and obtained their properties. 
In addition, in \cite{NobYam2023+}, we obtained some analytic properties of these scale functions, in particular we derived expressions for these $q$-scale functions with $q>0$ using the Volterra integral equations of the $0$-scale functions. 
I write roughly the statement of the main result of this paper, which is obtained by using the results in \cite{Nob2020_1} and \cite{NobYam2023+}. 
We consider a standard process with no positive jumps $X$ and another standard process with no positive jumps $Y$ which is defined from $X$ by time change using a continuous additive functional and by space change. 
Then, we can take the scale functions of $X$ and $Y$ as \cite{Nob2020_1} and get some properties. %of them by \cite{Nob2020_1} and \cite{NobYam2023+}. 
Using them and \cite{NobYam2023+}, we give the characterization of the scale functions of $Y$ via the Volterra integral equations with the scale functions of $X$.
\par
In this paper, the purpose of the above main result is to express the scale functions of some important stochastic processes that can be obtained by space-time change of spectrally negative L\'evy processes. 
Specifically, 
%In particular, 
we will represent the scale functions of positive or negative self-similar Markov processes with no positive jumps and continuous-state branching processes via the Volterra integral equation with the scale functions of spectrally negative L\'evy processes (however, in the case of continuous-state branching processes, please pay attention to Remark \ref{Rem402}). 
\par
This paper is organized as follows. 
In Section \ref{Pre}, we recall the definition of the scale functions in \cite{Nob2020_1} and the space-time change of the processes, and decide on the notation. 
In Section \ref{Sec003}, we give the main result and its proof. 
In Section \ref{Sec004}, we give the application of the main result. Specifically, we recall the definition of the scale functions of spectrally negative L\'evy processes and give %an application of the main result. 
the expression of the scale functions of positive or negative self-similar Markov processes with no positive jumps and continuous-state branching processes explained above.

\section{Preliminaries}\label{Pre}
\subsection{The setting of standard process with no positive jumps}\label{Sec002}
Let $X:=(\Omega , \cF, \cF_t, X_t, \theta_t, \bP_x)$ be a standard process (for its definition see, e.g., (9.2) in \cite[Section I]{BluGet1968}) with state space $I$, where $I$ is an interval in $\bR$. 
Let $\partial$ be the cemetery point of $X$. 
Thus, we have $X_t(\omega)=\partial$ for $t\geq \zeta (\omega)$ with $\zeta(\omega):= \inf \{t >0: X_t(\omega)=\partial\}$. 
From now on, we will omit ``$(\omega)$'' when it is clear that ``$(\omega)$'' is used. 
We regard $\bR\cup\{\partial\}$ as the one-point compactification of $\bR$. 
We assume that $X$ has no positive jumps, i.e., for $x\in I$, it holds 
\begin{align}
X_{t-}  \geq X_t , \qquad t \in [0,\infty),\qquad \bP_x\text{-a.s.}. 
\end{align}
We define the hitting times of $X$ as follows: for $x\in I$, 
\begin{align}
\begin{aligned}
&T_x:=\inf\{t>0 : X_t=x\}, \quad 
T^-_x:=\inf\{t>0 : X_t\leq x\},\\
&\qquad \qquad \qquad T^+_x:=\inf\{t>0 : X_t\geq x\}, 
\end{aligned}
\label{1}
\end{align}
where $\inf \emptyset =\infty$. 
Note that for $x, x^\prime \in I$ with $x<x^\prime$, we have $T^+_{x^\prime}=T_{x^\prime}$, $\bP_x$-a.s. since $X$ has no positive jumps.
We impose the following assumption on $X$. 
\begin{Ass}\label{Ass201}
\begin{enumerate}
%\item The map $(x, y) \mapsto \bE_x \sbra{e^{-T_y}}$ with $e^{-\infty}:=0$ is $\cB (I)\otimes \cB (I)$-measurable. 
\item For $x, x^\prime \in I$ with $x<x^\prime$, it holds $\bP_x \rbra{T_{x^\prime}<\infty}>0$. 
\item The process $X$ has a reference measure $m$ i.e. $m$ is a countable sum of finite measures, 
and for $A\in \cB(I)$, $m(A)=0$ if and only if 
\begin{align}
\bE_x\sbra{\int_0^\infty 1_{A}(X_t) dt}=0,\qquad x\in I, 
\end{align}
%(see, e.g., (1.1) in \cite[Section V]{BluGet1968}). 
\end{enumerate}
\end{Ass}
In addition, we assume that there exists a family of processes ${\{L^x\}}_{x \in I}$ with $L^x:=\{L^x_t: t\geq 0\}$ 
such that the map $(t, x, \omega) \mapsto L^x_t (\omega)$ is measurable with respect to ${(\cB([0, \infty))\otimes \cB(I)\otimes \cF )}^\ast$, 
the universal completion of $\cB([0, \infty))\otimes \cB(I)\otimes \cF $, and %that it holds, 
%for $x\in I$ and non-negative measurable function $f$, 
that $\bP_x$-a.s with $x\in I$, its holds, for $t\geq 0$ and non-negative measurable function $f$, 
\begin{align}
\int_0^t f(X_s) ds =\int_I f(x^\prime) L^{x^\prime}_t m(dx^\prime). %\qquad t\geq 0 ,\quad \bP_x\text{-a.s.}. 
\label{2}
\end{align}
%We can assume that these processes satisfy the following properties. %in the two cases.
%and the following properties.
We also impose assumptions on $L^x$. 
\begin{Ass}\label{Cond202a}
\begin{enumerate}
\item When $x$ is regular for itself, the process $L^x$ %satisfies the conditions of local times defined in (3.12) of \cite[Section V]{BluGet1968}. 
is continuous, non-decreasing, and satisfies that $L^x_t$ is $\cF_t$-measurable for $t\geq 0 $, 
\begin{align}
L^{x}_{s+t}
=L^{x}_{s}+L^{x}_{ t }\circ \theta_{ s}, \quad s, t \geq 0, \quad \bP_{x^\prime}\text{-a.s. } x^\prime\in I
\end{align}
and 
\begin{align}
\cbra{x^\prime\in I : \bE_{x^\prime}\sbra{e^{-R^x}}=1}=\{x\}, 
\end{align}
where $R^x:=\{t>0 : L^x_t>0\}$ and $e^{-\infty}:=0$. 
\item When $x$ is irregular for itself, there exists a constant $l^x\in (0, \infty)$ such that, for $x^\prime\in I$, 
\begin{align}
L^x_t=l^x \sharp \{s \in [0, t): X_s=x\}, \qquad t\geq 0 ,\quad \bP_{x^\prime}\text{-a.s.}. \label{6}
\end{align}
\end{enumerate}
\end{Ass}
For the definition of ``regular'' and ``irregular'', see, e.g., (11.1) in \cite[Section I]{BluGet1968}. 
%In this paper we call the process ${\{L^x\}}_{x\in I}$ is called a local times satisfying \eqref{2}. 
In this paper, we call the process $L^x$ a local time at $x\in I$. 
%We chose local times to satisfy $\bP_x(L^x =0 \text{ for all } t\geq 0)<1 $ for all $x\in I$. 
\begin{Rem}\label{Rem203}
If the map $(x, x^\prime) \mapsto \bE_x \sbra{e^{-T_{x^\prime}}}$ %with $e^{-\infty}:=0$ 
is $\cB (I)\otimes \cB (I)$-measurable, 
then we can take the local times which satisfy the conditions above by \cite[Theorem 18.4]{GemHor1980}. 
%For example, spectrally negative L\'evy processes satisfy this condition. 
For example, it is easy to confirm from existing results that spectrally negative L\'evy processes satisfy this condition. 
\end{Rem}
\par
Let $U$ be the set of function $u$ from $[0, \infty)$ to $I \cup\{\partial \}$ which is c\`adl\`ag and satisfies 
$u(t)=\partial$ for $t\geq \zeta^U (u)$ where $\zeta^U(u):= \inf \{t >0: u(t)=\partial\}$. 
The set $\cB(U)$ denote the class of Borel sets of $U$ equipped with the Skorokhod topology.
For $x \in I$, let $n_x$ be the measure on $U$ satisfying the following assumptions in each case above. 
\begin{Ass} \label{Cond202}
\begin{enumerate}
\item When $x$ is regular for itself and not a trap, %and not a holding point 
the measure $n_x$ on $U$ is the same as $\hat{P}$ which was constructed in \cite[Section III.3 (e) and (g)]{Blu1992} associated with $L^x$. 
Note that the measure $n_x(T_x \in \cdot)$ is equal to the L\'evy measure of the killed subordinator $\eta^x:=\{\eta^x_t: t\geq 0 \}$ where $\eta^x_t:= \sup\{s>0:  L^x_s > t\}$. 
\item When $x$ is irregular for itself, the measure $n_x$ on $U$ has the same distribution as $\frac{1}{l^x}\bP_x^x$ where $\bP_x^x$ is the low of $\{X_{t\land T_x}: t\geq 0\}$ under $\bP_x$. 
\end{enumerate}
\end{Ass}
In this paper, we call $n_x$ an excursion measure away from $x\in I$. 
We also denote the coordinate process of $n_x$ with $x\in I$ by $X$ and use the same notation for hitting times as $\zeta$ and \eqref{1}.
When considering the excursion measures of other stochastic processes, we will use the same notations for the coordinate processes of the excursion measures and related notations as for the original stochastic processes, as in the case of $X$ above.
\begin{Rem}\label{}
By \cite[Remark 2.1]{Nob2020_1} and Assumption \ref{Ass201} (i), any points in $I$ except for the point $\sup I $ cannot be holding points or traps.
\end{Rem}
%\begin{Rem}
%In \cite[Section III.3 (e)]{Blu1992}, $x\in I$ is assumed to be a recurrent point and the excursion measure away from $x$ is constructed. 
%However, we can construct the excursion measure in the case the $x$ is not a recurrent point. 
%\color{red}{[I need to check.]}
%\end{Rem}

\subsection{The scale functions}\label{Sec202}
For $q\geq 0$, we define function $W_X^{(q)}$ from $I \times I $ to $[0, \infty) $ as follows: for $x, x^\prime \in I$,  
\begin{align}
W_X^{(q)}(x, x^\prime)=
\begin{cases}
\frac{1}{n_{x^\prime} \sbra{e^{-q T^+_x}}}, \qquad &x\geq x^\prime, %\ y<\bar{I},
\\
0, \qquad &x<x^\prime,
\end{cases}
\label{3}
\end{align}
where %$0\times \infty =\infty$ and 
$\frac{1}{\infty}=0$
if $x^\prime$ is not a trap. 
If $\sup I \in I$ and $\sup I$ is a trap, then we define $W_X^{(q)}(x, \sup I):=0$ for $x\in I$. 
We call $W_X^{(q)}$ the $q$-scale function of $X$. 
For simplicity, we write $W_X := W_X^{(0)}$. 
%The following theorems of the scale functions are known. 
%These results have been known for a long time, especially in the case of L\'evy processes, and have been used in various studies.
%These results were shown in the case of the L\'evy process before \cite{Nob2020_1}, and have been used in various studies. 
\par
The following two theorems proved in \cite{Nob2020_1} are not be used in this paper, but they are important properties of the scale functions, so I write them here. 
\begin{Thm}[{\cite[Theorem 3.4]{Nob2020_1}}]\label{Thm206}
For $q\geq 0$ and $x, a, b \in I$ with $a<x<b$, we have 
\begin{align}
\bE_x \sbra{e^{-qT^+_b}; T^+_b<T^-_a}=\frac{W_X^{(q)}(x,a )}{W_X^{(q)}(b,a )}. 
\label{204}
\end{align}
\end{Thm}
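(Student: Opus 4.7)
The plan is to apply the Markov property of the excursion measure $n_a$ at the hitting time $T^+_x$, using the absence of positive jumps to pin down the position of the process at that time.

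First, I would observe that under $n_a$, on the event $\{T^+_x<\zeta\}$ with $a<x$, no positive jumps forces the ascent to level $x$ to be continuous, so $X_{T^+_x}=x$; moreover, by Assumption \ref{Cond202}, the lifetime $\zeta$ under $n_a$ coincides with the first return time $T_a$. Applying the strong Markov property of $n_a$ at $T^+_x$ to the functional $e^{-qT^+_b}1_{T^+_b<\zeta}$, which on $\{T^+_x<\zeta\}$ factors as $e^{-qT^+_x}$ times the $\theta_{T^+_x}$-shift of $e^{-qT^+_b}1_{T^+_b<T_a}$, I would obtain
\begin{align}
n_a\sbra{e^{-qT^+_b};T^+_b<\zeta}=n_a\sbra{e^{-qT^+_x};T^+_x<\zeta}\cdot\bE_x\sbra{e^{-qT^+_b};T^+_b<T_a}.
\end{align}

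Next, I would identify $\{T^+_b<T_a\}=\{T^+_b<T^-_a\}$ under $\bP_x$ for $x\in(a,b)$. The inclusion $\{T^+_b<T^-_a\}\subseteq\{T^+_b<T_a\}$ follows from $T^-_a\leq T_a$. For the reverse, if $T^-_a\leq T^+_b$, then either $T^-_a=T_a$, in which case $T_a\leq T^+_b$ contradicts $T^+_b<T_a$ immediately, or $T^-_a<T_a$ so that $X_{T^-_a}<a$ via a negative jump, and any subsequent ascent to level $b$ must cross $a$ continuously since $X$ has no positive jumps, again yielding $T_a\leq T^+_b$. Combined with the convention $T^+_y=\infty$ off $\{T^+_y<\zeta\}$, the previous display rearranges to
\begin{align}
\bE_x\sbra{e^{-qT^+_b};T^+_b<T^-_a}=\frac{n_a\sbra{e^{-qT^+_b}}}{n_a\sbra{e^{-qT^+_x}}}=\frac{W_X^{(q)}(x,a)}{W_X^{(q)}(b,a)},
\end{align}
the last equality being definition \eqref{3}.

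The main technical hurdle is formulating the strong Markov property of the $\sigma$-finite measure $n_a$ rigorously. For regular $a$, this is built into Blumenthal's construction of $n_a$ invoked in Assumption \ref{Cond202}(i); for irregular $a$, Assumption \ref{Cond202}(ii) presents $n_a$ as the constant multiple $(1/l^a)\bP_a^a$ of the law of $X$ started at $a$ and killed at $T_a$, so the argument reduces to the ordinary strong Markov property of $\bP_a$ at $T^+_x$, with the scaling $1/l^a$ cancelling in the ratio. Boundary cases where $n_a\sbra{e^{-qT^+_b}}=\infty$ (so $W_X^{(q)}(b,a)=0$ by the convention $1/\infty=0$) are dispatched by checking that both sides of \eqref{204} vanish consistently.
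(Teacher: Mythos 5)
The present paper does not prove Theorem \ref{Thm206}; it is quoted from \cite{Nob2020_1} explicitly as a result that is not used later, so there is no in-paper proof to compare against. That said, your argument is the natural excursion-theoretic proof of the identity and, in essence, it is sound: under $n_a$ an excursion starts at or below $a$, so by the absence of positive jumps it can only reach level $b$ by first creeping onto $x$, whence the Markov property of $n_a$ at $T^+_x$ gives $n_a\sbra{e^{-qT^+_b};T^+_b<\zeta}=n_a\sbra{e^{-qT^+_x};T^+_x<\zeta}\,\bE_x\sbra{e^{-qT^+_b};T^+_b<T_a}$, and your pathwise verification that $\{T^+_b<T_a\}=\{T^+_b<T^-_a\}$ under $\bP_x$ for $x\in(a,b)$ is correct; dividing and invoking \eqref{3} yields \eqref{204}. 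The reduction of the irregular case to the strong Markov property of $\bP_a$ is also the right move (note only that under Assumption \ref{Cond202} (ii) the path is stopped at $a$ rather than killed, so ``$\zeta=T_a$'' is not literally true, though this is immaterial since the stopped path cannot reach $b>a$).

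Two points need repair or at least more care. First, to divide through you must know $0<n_a\sbra{e^{-qT^+_x}}<\infty$ (and likewise for $b$): positivity comes from Assumption \ref{Ass201} (i) (the process started at $a$ can only reach $x>a$ during an excursion), and finiteness requires an argument, e.g.\ combining $n_a(\zeta>\delta)<\infty$ with the Markov property at $T^+_x$ and $\bP_x(T_a>\delta)>0$; you do not address positivity at all. Second, your closing remark about the boundary case is wrong as stated: if $n_a\sbra{e^{-qT^+_b}}=\infty$, then since $T^+_x\leq T^+_b$ under $n_a$ one also has $n_a\sbra{e^{-qT^+_x}}=\infty$, so \emph{both} $W^{(q)}_X(x,a)$ and $W^{(q)}_X(b,a)$ vanish and the right-hand side of \eqref{204} is $0/0$, while the left-hand side has no reason to vanish; the case cannot be ``dispatched by checking that both sides vanish consistently'' but must be ruled out by the finiteness argument just mentioned.
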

%For $a, b \in I$ with $a<b$, we define 
%\begin{align}
%X^{(a, b)}_t =
%\begin{cases}
%X_t, \qquad &t< T^-_a \land T^+_b,\\
%\partial, \qquad &t\geq T^-_a \land T^+_b .
%\end{cases}
%\end{align}
By \eqref{2}, we have, for $q\geq 0$, $x, a, b \in I$ with $a<x<b$ and non-negative measurable function $f$, we have 
\begin{align}
\bE_x \sbra{\int_0^{T^-_a \land T^+_b}e^{-qt} f(X_t) dt}
=\int_{(a,b)} f(x^\prime) \bE_x\sbra{\int_{[0, T^-_a \land T^+_b]} e^{-qt}dL^{x^\prime}_t }m (dx^\prime). 
\end{align}
\begin{Thm}[{\cite[Theorem 3.6]{Nob2020_1}}]\label{Thm207}
For $q\geq 0$ and $x, x^\prime, a, b \in I$ with $a<b$ and $x, x^\prime \in (a, b)$, we have 
\begin{align}
\bE_x\sbra{\int_{[0, T^-_a \land T^+_b]} e^{-qt}dL^{x^\prime}_t }= \frac{W_X^{(q)}(x, a) }{W_X^{(q)}(b, a)} W_X^{(q)}(b, x^\prime)- W_X^{(q)}(x, x^\prime). 
\end{align}
\end{Thm}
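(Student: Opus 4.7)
The plan is to combine the strong Markov property at the first visit to $x'$ with an excursion-theoretic computation at $x'$.

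First, I would reduce to the ``diagonal'' case $x = x'$. Since $L^{x'}$ does not begin to grow before $X$ first visits $x'$, the strong Markov property applied at $T_{x'}$ (together with the additive-functional property of $L^{x'}$) factorises the expectation as
\[
\bE_x\!\left[\int_{[0,T^-_a\wedge T^+_b]} e^{-qt}\,dL^{x'}_t\right]
= \bE_x\!\left[e^{-qT_{x'}};\,T_{x'}\le T^-_a\wedge T^+_b\right]\;
\bE_{x'}\!\left[\int_{[0,T^-_a\wedge T^+_b]} e^{-qt}\,dL^{x'}_t\right].
\]
For $x<x'$, the no-positive-jumps property forces $T_{x'}=T^+_{x'}$, so Theorem~\ref{Thm206} applied on the interval $(a,x')$ identifies the Laplace factor as $W_X^{(q)}(x,a)/W_X^{(q)}(x',a)$. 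For $x>x'$, I would decompose the exit identity of Theorem~\ref{Thm206} on $(a,b)$ according to whether $x'$ is hit before exit: since $X$ has no positive jumps, the event $\{T_{x'}>T^+_b\}$ coincides with $\{T^-_{x'}>T^+_b\}$ up to a null set, and Theorem~\ref{Thm206} on $(x',b)$ supplies the remaining piece, yielding the corresponding scale-function ratio for $\bE_x[e^{-qT_{x'}};\,T_{x'}\le T^-_a\wedge T^+_b]$.

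Next I would compute the diagonal quantity $\bE_{x'}[\int_{[0,T^-_a\wedge T^+_b]} e^{-qt}\,dL^{x'}_t]$ from the excursion measure $n_{x'}$. Under the time change $t=\eta^{x'}_s$, this integral becomes $\int_0^{S} e^{-q\eta^{x'}_s}\,ds$, where $S=L^{x'}_{T^-_a\wedge T^+_b}$ is the local time at the exit. In the Poisson point process of excursions with intensity $n_{x'}$, $S$ is the local time of the first excursion that exits $(a,b)$ before returning to $x'$. Splitting excursions into short (returning to $x'$ before exiting) and long (exiting before returning) and applying Campbell's formula produces a denominator of the form $\phi_{\text{short}}(q)+\kappa$ built from $n_{x'}$-integrals. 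The defining identity $n_{x'}[e^{-qT^+_b};\,T^+_b<T_{x'}]=1/W_X^{(q)}(b,x')$ captures the upper-boundary contribution directly, while applying Theorem~\ref{Thm206} under $\bP_{x'}$ on $(a,b)$ rewrites the lower-boundary contribution in terms of $W_X^{(q)}(x',a)$ and $W_X^{(q)}(b,a)$. Assembling the pieces gives
\[
\bE_{x'}\!\left[\int_{[0,T^-_a\wedge T^+_b]} e^{-qt}\,dL^{x'}_t\right]
= \frac{W_X^{(q)}(x',a)\,W_X^{(q)}(b,x')}{W_X^{(q)}(b,a)}-W_X^{(q)}(x',x').
\]

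Substituting this back into the strong Markov factorisation produces the claimed formula at a general $x$. The term $-W_X^{(q)}(x,x')$ accounts in particular for the atom of $dL^{x'}$ at the first visit to $x'$ in the irregular-for-itself case (in which $W_X^{(q)}(x',x')=l^{x'}>0$), while vanishing in the regular-for-itself case where $L^{x'}$ is continuous and $W_X^{(q)}(x',x')=0$. The main obstacle is the excursion-theoretic identification in the second step: converting the two-sided ``killing rate plus short Laplace exponent'' denominator into a quotient of one-sided scale functions requires combining the direct definitional identity at the upper boundary $b$ with an auxiliary application of Theorem~\ref{Thm206} at the lower boundary $a$, and doing so in a way that works uniformly across the regular and irregular cases.
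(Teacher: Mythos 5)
First, a framing point: this paper does not actually prove Theorem \ref{Thm207} --- it is quoted from \cite[Theorem 3.6]{Nob2020_1} and the text states explicitly that it is not used later --- so your proposal can only be judged against the excursion-theoretic proof in that reference. Your overall plan (strong Markov at $T_{x'}$ to reduce to the diagonal, then an excursion/renewal computation at $x'$) is the natural route, and in the case where $x'$ is regular for itself (so that $W_X^{(q)}(x',x')=0$ and $L^{x'}$ is continuous) it can be made to work, modulo the fact that the key excursion identity $n_{x'}\sbra{1-e^{-qT_{x'}}1_{\{T_{x'}<T^-_a\wedge T^+_b\}}}=W_X^{(q)}(b,a)/\rbra{W_X^{(q)}(x',a)W_X^{(q)}(b,x')}$ is only gestured at (``Campbell's formula'', ``assembling the pieces''); that identity is the real content of the cited proof.

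The genuine gap is in the irregular-for-itself case, which the setting explicitly includes (Assumption \ref{Cond202a}(ii)) and which is exactly the case of bounded-variation spectrally negative L\'evy processes in Section \ref{Sec401}, where $W_X^{(q)}(x',x')=l^{x'}>0$. There your intermediate claims are mutually inconsistent with the statement you are proving. Take $x<x'$: you assert the factorisation $u(x,x')=\bE_x\sbra{e^{-qT_{x'}};T_{x'}\le T^-_a\wedge T^+_b}\,u(x',x')$ with factor $W_X^{(q)}(x,a)/W_X^{(q)}(x',a)$, and the diagonal value $u(x',x')=\tfrac{W_X^{(q)}(x',a)W_X^{(q)}(b,x')}{W_X^{(q)}(b,a)}-W_X^{(q)}(x',x')$; multiplying these gives $\tfrac{W_X^{(q)}(x,a)W_X^{(q)}(b,x')}{W_X^{(q)}(b,a)}-\tfrac{W_X^{(q)}(x,a)}{W_X^{(q)}(x',a)}W_X^{(q)}(x',x')$, whereas the theorem asserts $\tfrac{W_X^{(q)}(x,a)W_X^{(q)}(b,x')}{W_X^{(q)}(b,a)}$ (since $W_X^{(q)}(x,x')=0$ for $x<x'$). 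The spurious term does not vanish when $l^{x'}>0$. The culprit is the atom of $dL^{x'}$ at the first visit: if the time-zero atom is counted under $\bP_{x'}$ (which is what makes the additivity of $L^{x'}$ turn your strong Markov step into an exact factorisation), then the renewal computation gives $u(x',x')=l^{x'}/(1-r)$ with $r=\bE_{x'}\sbra{e^{-qT_{x'}};T_{x'}<T^-_a\wedge T^+_b}$, which equals $\tfrac{W_X^{(q)}(x',a)W_X^{(q)}(b,x')}{W_X^{(q)}(b,a)}$ \emph{without} the subtraction; if instead the time-zero atom is excluded, your diagonal formula is right but the factorisation must be corrected to $u(x,x')=\bE_x\sbra{e^{-qT_{x'}};T_{x'}\le T^-_a\wedge T^+_b}\rbra{l^{x'}+u(x',x')}$, because the atom at $T_{x'}$ is present under $\bP_x$ but missing from the shifted diagonal quantity. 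Your closing remark that the term $-W_X^{(q)}(x,x')$ ``accounts for the atom at the first visit'' is precisely where this is papered over: your own factorisation would place that correction as $-\bE_x\sbra{e^{-qT_{x'}};\cdot}\,W_X^{(q)}(x',x')$, which is a different function of $x$ from $-W_X^{(q)}(x,x')$ (for $x>x'$ the latter grows with $x-x'$, the former is bounded by $l^{x'}$). A quick check on a bounded-variation SNLP with drift $\delta$ (so $l^{x'}=W^{(q)}(0)=1/\delta$) shows only the corrected bookkeeping reproduces the stated right-hand side off the diagonal. So the second step needs to be redone with one fixed convention for the first-visit atom, used consistently in both the diagonal excursion computation and the first-passage factorisation.
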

The following theorem is important for proving the main theorem. 
\begin{Thm}[{\cite[Proposition 3.3 and Theorem 3.5]{NobYam2023+}}]\label{Thm204}
For $q>0$ and $a, b, c \in I$ with $b\leq a \leq c$, the function $W_X^{(q)}(a, \cdot)$ on $[b, c]$ is the unique solution of the Volterra integral equation
\begin{align}
f(x)=W_X(a, x)+q \int_{(x , a)} f(u) W_X(u, x) m(du), \qquad x \in [b, c] . 
\label{13}
\end{align}
\end{Thm}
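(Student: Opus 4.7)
The theorem has two parts: existence (that $W_X^{(q)}(a,\cdot)$ solves the Volterra equation) and uniqueness of the solution on $[b,c]$; I would handle them separately.

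\emph{Existence.} The starting point is the pathwise identity $1 - e^{-qT_a^+} = q\int_0^{T_a^+} e^{-qs}\, ds$. Integrating against the excursion measure $n_x$ and rearranging gives
\[
n_x[T_a^+ < \infty] - n_x[e^{-qT_a^+}] = q\, n_x\!\left[\int_0^{T_a^+} e^{-qs}\, 1_{T_a^+ < \infty}\, ds\right].
\]
Because $X$ has no positive jumps, an excursion hitting $a$ lies in $[x,a]$ throughout $[0,T_a^+]$, so the occupation-density formula (the $n_x$-analogue of Assumption \ref{Ass201}(ii)) transforms the time integral into a spatial integral over $u\in(x,a)$ of $\int_0^{T_a^+}e^{-qs}\,dL^u_s$ against $m(du)$. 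At each $u\in(x,a)$ the local time $L^u$ stays at $0$ until $T_u^+$, so I would split the integral at $T_u^+$ and apply the strong Markov property under $n_x$, obtaining
\[
n_x\!\left[\int_0^{T_a^+} e^{-qs}1_{T_a^+<\infty}\,dL^u_s\right] = n_x[e^{-qT_u^+}]\cdot\bE_u\!\left[\int_0^{T_a^+}e^{-qs}1_{T_a^+<\infty}\,dL^u_s\right],
\]
together with the factorisations $n_x[e^{-qT_a^+}]=n_x[e^{-qT_u^+}]\bE_u[e^{-qT_a^+}]$ and $n_x[T_a^+<\infty]=n_x[T_u^+<\infty]\bP_u[T_a^+<\infty]$. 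The first factor is $1/W_X^{(q)}(u,x)$; the $\bP_u$-expectation can be evaluated by excursion theory at $u$, using the Poisson point process of excursions away from $u$ split into ones hitting $a$ (with intensity $1/W_X(a,u)$) and the short ones not hitting $a$. Reassembling everything and clearing denominators yields
\[
\frac{1}{n_x[e^{-qT_a^+}]} - \frac{1}{n_x[T_a^+<\infty]} = q\int_{(x,a)}\frac{m(du)}{n_u[e^{-qT_a^+}]\,n_x[T_u^+<\infty]},
\]
which is exactly the asserted Volterra equation.

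\emph{Uniqueness.} The kernel of the Volterra operator $(\cT f)(x):=q\int_{(x,a)} f(u)W_X(u,x)\,m(du)$ is uniformly bounded on the compact $[b,c]$ (by continuity and monotonicity of $W_X$ established in \cite{Nob2020_1}), and $m$ restricted to $[b,c]$ is finite. The nesting of the integration regions $(x,a)$ makes this a genuine Volterra-type operator, so iteration gives $\|\cT^n f\|_\infty\le(qCm([b,c]))^n/n!\,\|f\|_\infty$. The Neumann series $\sum_n\cT^n$ therefore converges in operator norm, and the inhomogeneous equation has at most one solution.

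\emph{Main obstacle.} The most delicate step is the excursion-theoretic evaluation of $\bE_u[\int_0^{T_a^+}e^{-qs}1_{T_a^+<\infty}\,dL^u_s]$ and verifying that, after combining it with the Markov-property factorisations above, the various factors collapse precisely into $W_X^{(q)}(a,u)W_X(u,x)$. This requires a careful Laplace-exponent decomposition of excursions away from $u$ into short and long parts, and separate treatment of the regular and irregular cases of Assumption \ref{Cond202}. The Fubini exchanges and strong Markov property applied under the potentially infinite excursion measure $n_x$ must also be justified, but this is routine given the construction in \cite[Section III.3]{Blu1992} that underlies Assumption \ref{Cond202}.
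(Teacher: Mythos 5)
First, a point of comparison: this paper does not prove Theorem \ref{Thm204} at all --- it is imported verbatim from \cite[Proposition 3.3 and Theorem 3.5]{NobYam2023+} and only used as an ingredient for Theorem \ref{Thm301}. So there is no in-paper proof to measure you against; I can only judge your sketch on its own terms.

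Your overall plan (integrate $1-e^{-qT_a^+}=q\int_0^{T_a^+}e^{-qs}ds$ against $n_x$ on $\{T_a^+<\infty\}$, convert the time integral into $\int_{(x,a)}\cdots m(du)$ by the occupation formula, split at $T_u^+$, and finish with a Neumann-series uniqueness argument) is sensible, and the final display you aim for is indeed equivalent to \eqref{13}. But the factorisations you write down are false as stated, because after $T_u^+$ the path under $n_x$ is \emph{not} governed by the free law $\bP_u$: the excursion is killed at its return time to $x$. For instance, for a spectrally negative L\'evy process the claimed identity $n_x\sbra{e^{-qT_a^+}}=n_x\sbra{e^{-qT_u^+}}\,\bE_u\sbra{e^{-qT_a^+}}$ would read $1/W^{(q)}(a-x)=\rbra{1/W^{(q)}(u-x)}e^{-\Phi(q)(a-u)}$, which is wrong; the correct post-$T_u^+$ quantities are the two-sided ones, $\bE_u\sbra{e^{-qT_a^+}; T_a^+<T_x}$, $\bP_u\rbra{T_a^+<T_x}$ and $\bE_u\sbra{1_{\{T_a^+<T_x\}}\int_{[0,T_a^+]}e^{-qs}\,dL^u_s}$. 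The same omission occurs in your excursion decomposition at $u$, which splits excursions only into ``hit $a$'' and ``short'' ones and never accounts for excursions from $u$ that reach $x$ (terminating the original excursion); without that third class the computation cannot produce the factor $W_X(u,x)$ that must appear. The repair is available --- use the Markov property of $n_x$ with the semigroup killed at $T_x$, evaluate $\bE_u\sbra{1_{\{T_a^+<T_x\}}\int e^{-qs}dL^u_s}$ by an optional-projection step (giving the factor $\bP_u(T_a^+<T_x)$) together with the two-sided local-time identity of Theorem \ref{Thm207} --- and in the regular case the product then does collapse to $W_X^{(q)}(a,u)W_X(u,x)/\rbra{W_X^{(q)}(a,x)W_X(a,x)}$ as needed; but as written the central step of your existence argument fails, and this is more than the ``routine'' verification you defer to.

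Two further points. In the irregular (bounded-variation-type) case the measure $dL^u$ has an atom of size $l^u$ at the hitting time of $u$, which produces an extra $W_X^{(q)}(u,u)$-term in the two-sided local-time identity; your sketch flags ``separate treatment'' but gives no indication of how this term cancels, and it is exactly the kind of normalisation issue that can break the final constant. For uniqueness, your Neumann-series bound uses $m([b,c])<\infty$ and a bounded kernel, but the standing assumption only makes $m$ a countable sum of finite measures ($\sigma$-finite), so local finiteness is not guaranteed; one should either justify it or run the iteration against an integrability bound for $W_X(\cdot,x)\,m(du)$ of the type furnished by Theorem \ref{Thm207}. The uniqueness mechanism itself (ordered simplex, $1/n!$ gain) is fine.
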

\begin{Rem}
In \cite{Nob2020_1} and \cite{NobYam2023+}, the above theorems are proved under the condition that  the map $(x, x^\prime) \mapsto \bE_x \sbra{e^{-T_{x^\prime}}}$ is $\cB (I)\otimes \cB (I)$-measurable. However, this condition is used only for the existence of local times (see Remark \ref{Rem203}), and thus these theorems hold in the situation above.
\end{Rem}
\begin{Rem}
%The scale functions of $X$ are not unique, and depends on how we choose the measure $m$ and the local times that satisfy \eqref{2}.
%However, regardless of the measure $m$ and local times, the scale functions defined according to them satisfy the above theorems. 
The scale functions of $X$ are defined uniquely from the local times. 
However, the setting of the reference measure $m$ is not unique, and the setting of the local times is unique in $m$-a.e.. 
The definition of the scale functions depends on them. 
For example, let $m$ be a reference measure on $I$, ${\{ L^x\}}_{x\in I}$ be the set of local times associated with $m$ and 
${\{W_X^{(q)}\}}_{q\geq 0}$ be the set of the scale functions associated with ${\{ L^x\}}_{x\in I}$.  
In addition, let $h$ and $h^\prime$ be measurable functions from $I$ for $(0, \infty)$ such that $h=h^\prime$, $m$-a.e.. 
Then, the measure $m^\prime(dx)=h(x)m(dx)$ on $I$ is also a reference measure. 
In addition, ${\{ L^{ \prime, x}\}}_{x\in I}$, where $ L^{ \prime, x}_t := \frac{1}{h^\prime(x)} L^x_t$ for $x \in I$ and $t\geq 0$, is the set of local times associated with $m^\prime$, 
and ${\{W_X^{ \prime, (q)}\}}_{q\geq 0}$, where $W_X^{ \prime, (q)}(x, x^\prime)=\frac{1}{h^\prime(x^\prime)}W_X^{ \prime, (q)}(x, x^\prime)$ for $q\geq 0 $ and $x, x^\prime \in I$, is the set of the scale functions associated with ${\{ L^{\prime,x}\}}_{x\in I}$. 
Regardless of how we fix the measure $m$ and the local times, the scale functions defined according to them satisfy the above theorems.
\end{Rem}

\subsection{Space-time changed process}\label{SubSec203}
We fix the functions $h_S$ and $h_T$ from $I$ to $\bR$ satisfying the following conditions 
\begin{enumerate}
\item The function $h_S$ is continuous and strictly increasing. 
\item The function $h_T$ is $\cB(I)$-measurable and strictly positive. 
\item We assume that $h_S(\partial):= \partial$ and $h_T(\partial):=0$. 
\item We define the process $A:=\{A_t : t\geq 0\}$ as 
\begin{align}
A_t:=\int_0^t h_T (X_t) dt,\qquad t\geq 0 .
\end{align}
%Then, it holds $A_t<\infty $ for $t\geq 0$, $\bP_x$-a.s. with $x\in I$.
\end{enumerate}
We assume that $A_t<\infty$ for $t\in[0, \zeta)$, $\bP_x$-a.s. for $x\in I$. 
The process $A$ is a continuous additive functional. 
We write its right inverse as 
\begin{align}
\tau(t):=\inf\{s>0:A_s>t\},\qquad t\geq 0 .
\end{align}
Since $A$ is strictly increasing and continuous on $[0, \zeta)$, we have $\tau(A_t)=t$ for $t\in[0, \zeta)$ and $A_{\tau(t)}=t$ for $t\in[0, A_\zeta)$. 
%Since $A$ is strictly increasing and continuous on $\{t\in[0, \infty): A_t<\infty\}$, we have $A_{\tau(t)}=\tau(A_t)=t$ for $t\in [0, \infty)$ with $A_t<\infty$. %for $t\in[0, \zeta)$. %with $A_t<\infty$. 
For $t\geq 0 $, we write $Y_t:=h_S (X_{\tau(t)})$ with $X_{\infty}:=\partial$, $\cF^Y_t:= \cF_{\tau(t)}$ and $\theta^Y_t:= \theta_{\tau(t)}$. For $y \in h_S(I)$, we write $\bP^Y_y:= \bP_{h_S^{-1} (y)}$. 
Then, %and some computations, 
the process $Y:=(\Omega , \cF, \cF^Y_t, Y_t, \theta^Y_t, \bP^Y_y)$ is a standard process on the interval $h_S(I)$ with no positive jumps (see, e.g., (2.11) in \cite[Section V]{BluGet1968}). 
\par
%By drawing $Y$ on the right shoulder of the symbols in \eqref{1}, we will represent the same hitting times for $Y$ as \eqref{1}.
We express the hitting times of $Y$ in the same way as for $X$, by putting $Y$ on the right shoulder of the symbols for the hitting times of $X$.
\par
It is obvious that %$Y$ satisfies the condition of Assumption \ref{Ass201} (ii) and 
the measure $m(h_S^{-1}(\cdot))$ is a reference measure of $Y$. 
%From Lemma , $Y$ also satisfies the condition of Assumption \ref{Ass201} (i). 
%Therefore, as in the case of $X$, if we fix the reference measure of $Y$, we can define local times, excursion measures, and scale functions for $Y$.

\section{The scale functions of space-time changed processes}\label{Sec003}
Let $X$ be the standard process defined in Section \ref{Sec002} 
and let $Y$ be the process defined as Section \ref{SubSec203}. 
In this section, we represent the scale functions of $Y$ using the scale functions of $X$ and the Volterra integral equations. 
\par
Before writing the main theorem, we need to set the situation. 
We define the reference measure $m$ of $X$, the local times ${\{L^x\}}_{x\in I}$ of $X$, 
the excursion measures ${\{n_x\}}_{x\in I}$ (except trap in $I$) of $X$ 
and the scale functions ${\{W_X^{(q)}\}}_{q\geq 0}$ of $X$ as Section \ref{Sec002}. 
%Let $m_Y$ be the reference measure of $Y$. 
%Then, since the measure $m(h_S^{-1}(\cdot))$ is also the reference measure of $Y$, 
%there exists
For a measurable function $h_D$ from $h_S(I)$ to $(0, \infty)$, %that is locally bounded, %such that 
%We define measure $m_{Y}(\cdot):= m(h_S^{-1}(\cdot))$. 
%\begin{align}
%m_{Y} (A):= m ( h_S^{-1} (A) )\qquad A\in \cB ( h_S(I) ). 
%\end{align}
we define 
\begin{align}
m_{Y} (B):= \int_B h_D(y) m ( h_S^{-1} (dy) ), \qquad B\in \cB ( h_S(I) ). 
\end{align}
Then $m_Y$ is a reference measure of $Y$. 
\begin{Rem}
By changing the function $h_D$ appropriately, $m_Y$ can represent any reference measures of $Y$.
\end{Rem}
For $y \in h_S (I)$, we define the process $L^{Y, y}:=\{L^{Y, y}_t: t\geq 0 \}$ as 
\begin{align}
L^{Y, y}_t         : =  H(y) L^{h_S^{-1}(y)}_{\tau(t)} ,\qquad t\geq 0,\label{12}
\end{align}
where $H(y):=\frac{h_T(h_S^{-1}(y))}{h_D(y)} $ for $y \in h_S(I)$. 
Then, we have the following lemma. 
\begin{Lem}\label{Lem302}
%where $h^x( t) = \frac{t}{h_T (h_S^{-1} (x))}$ for $t\geq 0$. 
The processes ${\{L^{Y, y}\}}_{y \in h_S(I)}$ are local times of $Y$ satisfying \eqref{2} with respect to $m_Y$. 
\end{Lem}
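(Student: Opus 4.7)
The plan is to verify first the occupation density formula \eqref{2} for $Y$ and then the structural conditions of Assumption \ref{Cond202a} with $Y$ in place of $X$.

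For the occupation density, I would start from $\int_0^t f(Y_s)\,ds = \int_0^t f(h_S(X_{\tau(s)}))\,ds$ and make the substitution $s = A_u$, $ds = h_T(X_u)\,du$, valid since $A$ is continuous and strictly increasing on $[0,\zeta)$ with $\tau(A_u) = u$. This rewrites the integral as $\int_0^{\tau(t)} f(h_S(X_u))\,h_T(X_u)\,du$. Applying \eqref{2} for $X$ to the integrand $g(x) := f(h_S(x)) h_T(x)$, then pushing forward by $y = h_S(x)$ in the outer integral, and finally using the identity $h_T(h_S^{-1}(y)) = H(y)\,h_D(y)$, produces exactly $\int_{h_S(I)} f(y)\,L^{Y,y}_t\,m_Y(dy)$.

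Next I would check the remaining conditions of Assumption \ref{Cond202a}. Measurability of $(t,y,\omega) \mapsto L^{Y,y}_t$ and adaptedness to $\cF^Y_t = \cF_{\tau(t)}$ follow directly from the definition together with the corresponding properties of $L^x$ and the fact that $\tau(t)$ is an $\cF$-stopping time. For additivity I would use the CAF identity $A_{u+v} = A_u + A_v\circ\theta_u$, which yields $\tau(s+t) = \tau(s) + \tau(t)\circ\theta_{\tau(s)}$; combined with the additivity of $L^{h_S^{-1}(y)}$ this gives $L^{Y,y}_{s+t} = L^{Y,y}_s + L^{Y,y}_t\circ\theta^Y_s$.

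For the regular case, continuity and monotonicity of $L^{Y,y}$ are inherited from $L^{h_S^{-1}(y)}$ and $\tau$. The key point is the condition $\{y' : \bE^Y_{y'}[e^{-R^{Y,y}}] = 1\} = \{y\}$. Here I would establish the identity $R^{Y,y} = A_{R^{h_S^{-1}(y)}}$ by observing that $L^{Y,y}_t > 0 \iff \tau(t) > R^{h_S^{-1}(y)} \iff t > A_{R^{h_S^{-1}(y)}}$; then, because $A$ is strictly increasing from $0$, $\bE^Y_{y'}[e^{-R^{Y,y}}] = 1$ is equivalent to $R^{h_S^{-1}(y)} = 0$ $\bP_{h_S^{-1}(y')}$-a.s., and the $X$-side identification then forces $y' = y$. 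For the irregular case, the map $s \mapsto A_s$ is a bijection from $\{s \in [0,\tau(t)) : X_s = h_S^{-1}(y)\}$ onto $\{u \in [0,t) : Y_u = y\}$, which immediately yields the counting formula \eqref{6} for $L^{Y,y}$ with constant $l^{Y,y} = H(y)\,l^{h_S^{-1}(y)}$. The only subtlety I anticipate is carefully matching the regular/irregular dichotomy and the filtrations across the time change; the substantive content is the change of variables in the first step.
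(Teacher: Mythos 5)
Your proposal is correct and follows essentially the same route as the paper: the change of variables $s=A_u$ combined with \eqref{2} for $X$ and the push-forward $y=h_S(x)$ gives the occupation formula, and the verification of Assumption \ref{Cond202a} (measurability, adaptedness via the stopping time $\tau(t)$, additivity from $\tau(s+t)=\tau(s)+\tau(t)\circ\theta_{\tau(s)}$, the identification $R^{Y,y}=A_{R^{h_S^{-1}(y)}}$ in the regular case, and the counting formula with constant $H(y)l^{h_S^{-1}(y)}$ in the irregular case) matches the paper's argument step by step. Your treatment of the regular-case condition is in fact slightly more explicit than the paper's, which simply cites the defining property of $L^{h_S^{-1}(y)}$.
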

\begin{proof}
%Since the $Y$ and $L^{Y,x}$ with $x\in h_S(I)$ are time-transformed using $\tau$, it is obvious that 
%Since functions $h^{-1}_S$, $H$ and $(t, \omega) \mapsto \tau (t)(\omega)$ are $h_s(I)\slash I$, $h_s(I)\slash \cB((0, \infty))$ and $\cB([0, \infty))\otimes\cF\slash\cB([0, \infty))$-measurable, respectively, 
It is obvious that the map $(t, y, \omega) \mapsto H(y) L^{h_S^{-1}(y)}_{\tau(t)} (\omega)$ is measurable with respect to ${(\cB([0, \infty))\otimes \cB(h_S(I))\otimes \cF )}^\ast$. 
For $y\in h_S(I)$, non-negative measurable function $f$ on $h_S(I)$ and $t\geq 0 $, we have 
\begin{align}
\int_0^t f(Y_s) ds=\int_0^t f(h_S(X_{\tau(s)}))ds
=&\int_0^{\tau(t)} f(h_S(X_{u}))h_T(X_u)du \\
=&\int_I f(h_S(x))h_T(x) L^x_{\tau(t)} m(dx) \\
=&\int_{h_S(I)} f(y^\prime)H(y^\prime) L^{h_S^{-1}(y^\prime)}_{\tau(t)} m_Y(dy^\prime),\quad \bP^Y_y\text{-a.s.}, 
\end{align}
where in the second and the last equalities, we used the change of variables $u=\tau(s)$ and $x=h_S^{-1}(y^\prime)$, respectively, and in the third equality, we used \eqref{2}. Thus, the processes ${\{L^{Y, y}\}}_{y \in h_S(I)}$ satisfies \eqref{2} for $Y$. 
%{\color{red}{[I need to check $ H(x)L^{h_S^{-1}(x)}_{\tau(t)}$ satisfied the conditions of local time.]}}
We confirm that ${\{L^{Y, y}\}}_{y \in h_S(I)}$ satisfies Assumption \ref{Cond202a}.  
\begin{enumerate}
\item We assume that $y\in h_S(I)$ is regular for itself for $Y$. 
Then, $h_S^{-1}(y)$ is also regular for itself for $X$ by the definition of $\tau$. 
Thus, $L^{Y, y}$ is a continuous, non-decreasing process. 
In addition, $L^{Y, y}_t=L^{h_S^{-1}(y)}_{\tau(t)}$ is $\cF^Y_t$-measurable for $t\geq 0$ since $\tau(t)$ is a stopping time of ${\{\cF_t\}}_{t\geq 0 }$. 
By the property of the local time $L^{h_S^{-1}(y)}$, we have, for $y^\prime \in h_S(I)$,
\begin{align}
L^{Y, y}_{s+t}=L^{h_S^{-1}(y)}_{\tau(s+t)}=L^{h_S^{-1}(y)}_{\tau(s)+ (\tau(t)) \circ \theta_{\tau(s)} }
=&L^{h_S^{-1}(y)}_{\tau(s)}+(L^{h_S^{-1}(y)}_{ \tau (t)} )\circ \theta_{ \tau(s)} \\
=&L^{Y, y}_{s}+L^{Y, y}_{t}\circ \theta^Y_s,\quad s, t \geq 0, \quad \bP^Y_{y^\prime}\text{-a.s..}
\end{align}
We write $R^{Y,y }:=\{t>0: L^{Y, y}_t>0\}$.  %and $R^{h_S^{-1}(y) }:=\{t>0: L^{h_S^{-1}( y)}_t>0\}$. 
Then, %for $y\in h_S(I)$, 
we have 
\begin{align}
\cbra{y^\prime\in h_S(I): \bE^Y_{y^\prime}\sbra{e^{-R^{Y,y}}}=1}&=\Big{\{}y^\prime \in h_S(I): \bE_{h_S^{-1}(y^\prime) }\Big{[}e^{- \tau^{-1}({R^{h_S^{-1}(y)}})}\Big{]}=1\Big{\}}\\
&=\{y\},
\end{align} 
%since $\bE^Y_y\sbra{e^{-R^{Y,x}}}=\bE_{h_S^{-1}(y) }\sbra{e^{- \tau^{-1}({R^{h_S^{-1}(x)}})}}$ and 
by the definition of the local time $L^{h_S^{-1}(y)}$. 
Therefore, the process $L^{Y, x}$ satisfies Assumption \ref{Cond202a} (i). 

\item We assume that $y\in h_S^{-1}(I)$ is irregular for itself for $Y$. 
Then, $h_S^{-1}(y)$ is also irregular for itself for $X$ by the definition of $\tau$. 
Thus, by \eqref{6}, we have, for $y^\prime\in h_S(I)$, 
\begin{align}
L^{Y,y}_t=H(y) L^{h_S^{-1}(y)}_{\tau(t)}&=H(y)l^{h_S^{-1}(y)} \sharp \{s \in [0, \tau(t)): X_s={h_S^{-1}(y)}\}
\\
&=H(y)l^{h_S^{-1}(y)} \sharp \{s \in [0,t): Y_s=y\}
, \qquad t\geq 0 ,\ \ \bP^Y_{y^\prime}\text{-a.s.}. 
\label{14}
\end{align}
\end{enumerate}
The proof is complete. 
\end{proof}
\par
%Then, we have the following theorem. 
%Under the above setting, we have the following theorem. 
For $y\in h_S(I)$ which is not a trap, we denote $n^Y_y$ the excursion measure away from $y$ for $Y$ associated with the local time $L^{Y, y}$. 
In addition, we denote the functions $\{W^{(q)}_Y\}_{q \geq 0}$ from $h_S(I)\times h_S(I)$ to $[0, \infty)$ the scale functions of $Y$ defined from the excursion measures above. %${\{n^{Y}_y\}}_{y \in h_S(I)}$. 
Then, we have the following main theorem. 
\begin{Thm}\label{Thm301}
For $q\geq 0$ and $a\in h_S(I)$, the function $y \mapsto W^{(q)}_Y(a, y)$ on $h_S(I)$ is an unique solution of the Volterra integral equation
\begin{align}
\begin{aligned}
f(y)=H(y)W_X(&h_S^{-1}(a), h_S^{-1}(y))
\\
&+ H(y) q\int_{(x, a)} f(y^\prime) W_X(h_S^{-1}(y^\prime), h_S^{-1}(y))  m_Y(dy^\prime), \\ %\\
 %&
 %h_S(I), %\backslash \{{h_S(\bar{I})}\} , 
 \end{aligned}
 \quad
 y \in h_S (I).
 \label{4}
\end{align}
%where $H(y):=\frac{h_T(h_S^{-1}(y))}{h_D(y)} $ for $y \in h_S(I)$, 
%has an unique solution on $h_S(I)$. 
%We write this solution as $W^{(q)}_Y(a, \cdot)$. 
%Then, the functions $\{W^{(q)}_Y\}_{q \geq 0}$ from $h_S(I)\times h_S(I)$ to $[0, \infty)$ are scale functions of $Y$ defined from a set of local times associated with the reference measure $m_Y$ and \eqref{3}.  
\end{Thm}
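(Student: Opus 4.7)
The plan is to combine Theorem \ref{Thm204} applied to $Y$ with a reduction of the $0$-scale function of $Y$ to that of $X$. By Lemma \ref{Lem302}, $Y$ is a standard process with no positive jumps on $h_S(I)$ equipped with reference measure $m_Y$ and local times $\{L^{Y,y}\}_{y\in h_S(I)}$ satisfying Assumption \ref{Cond202a}, so Theorem \ref{Thm204} applies to $Y$ and yields, for $q > 0$ and $a \in h_S(I)$, that $y \mapsto W_Y^{(q)}(a, y)$ is the unique solution of
\[ g(y) = W_Y(a, y) + q \int_{(y, a)} g(y') W_Y(y', y) \, m_Y(dy'), \qquad y \in h_S(I). \]
The task thus reduces to establishing the key $0$-scale identity
\[ W_Y(a, y) = H(y) \, W_X\bigl(h_S^{-1}(a), h_S^{-1}(y)\bigr) \qquad \text{for } a, y \in h_S(I), \; a \ge y. \]
Substituting this identity into the Volterra equation above and pulling $H(y)$ out of the integrand gives exactly \eqref{4}. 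The $q = 0$ case is then immediate, the case $y > a$ follows since both sides of \eqref{4} vanish and the integral is over an empty set, and the uniqueness claim is inherited directly from Theorem \ref{Thm204}.

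To prove the $0$-scale identity I would compare the excursion measures $n_y^Y$ and $n_{h_S^{-1}(y)}$ via the space-time change. Writing $x := h_S^{-1}(y)$, every $X$-excursion $e$ away from $x$ is sent by the space-time change to a $Y$-excursion $\Phi(e)$ away from $y$, and this correspondence $\Phi$ is a bijection between the two excursion spaces. Because $L^{Y,y}$ is obtained from $L^x$ by the time change $\tau$ composed with multiplication by the scalar $H(y)$ (see \eqref{12}), a change-of-clock argument for the excursion Poisson point process yields
\[ n_y^Y = \frac{1}{H(y)} \, \Phi_* \, n_{h_S^{-1}(y)}. \]
In the regular case this is verified using Assumption \ref{Cond202} (i) (Blumenthal's construction) applied to the rescaled local time; in the irregular case it is immediate from Assumption \ref{Cond202} (ii) together with the relation $l^{Y,y} = H(y) l^x$ already observed in \eqref{14}. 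Since $h_S$ is strictly increasing and $\tau$ is strictly increasing and continuous on $[0, \zeta)$, the hitting event is preserved under $\Phi$: $T_a^{Y,+}(\Phi(e)) < \infty$ if and only if $T_{h_S^{-1}(a)}^+(e) < \infty$. Evaluating the displayed identity on this event and taking reciprocals then yields the $0$-scale identity.

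The main technical difficulty is rigorously justifying the relation $n_y^Y = \tfrac{1}{H(y)} \Phi_* n_{h_S^{-1}(y)}$ in the regular case, where one must reconcile Blumenthal's intrinsic construction of the excursion measure with both the strictly increasing continuous time change $\tau$ and the scalar rescaling of the local time by $H(y)$. The irregular case, by contrast, is elementary bookkeeping from the explicit formula $n_x = (1/l^x)\bP_x^x$ in Assumption \ref{Cond202} (ii).
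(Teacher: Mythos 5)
Your proposal follows essentially the same route as the paper: apply Theorem \ref{Thm204} to $Y$ (legitimized by Lemma \ref{Lem302}) and reduce everything to the identity $W_Y(y',y)=H(y)W_X(h_S^{-1}(y'),h_S^{-1}(y))$, obtained from the excursion-measure comparison $n^Y_y=\tfrac{1}{H(y)}\Phi_* n_{h_S^{-1}(y)}$, with the irregular case handled directly from Assumption \ref{Cond202} (ii) and \eqref{14}. The regular-case step you flag as the main difficulty is precisely what the paper's Lemma \ref{LemA01} carries out, by conditioning both excursion measures on exit from an $\varepsilon$-neighborhood (where the conditional laws agree under the space-time change), comparing the total masses via $-\log\bP\rbra{L_{T(\varepsilon)}>1}$ to produce the factor $1/H(y)$, and letting $\varepsilon\downarrow 0$.
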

\begin{Rem}
For $a, b, c \in h_S(I)$ with $b\leq a \leq c$, the function $t\mapsto W^{(q)}_Y(a, y)$ on $[b ,c]$ is a unique solution of \eqref{4} with $y\in[b, c]$. 
This fact can be easily confirmed by Theorem \ref{Thm204} and the proof of Theorem \ref{Thm301} (especially around \eqref{5}). 
\end{Rem}
%Before we prove Theorem \ref{Thm301}, we give lemmas about the relationships between the local times of $X$ and $Y$ and about the relationships the excursion measures associated with them.
\par
Theorem \ref{Thm301} can be proved immediately by using the following lemma and Theorem \ref{Thm204}. 
%By Lemma \ref{LemA01}, we can define the excursion measure away from $x \in h_S(I)$ which satisfies Assumption \ref{Cond202} with respect to $L^{Y,x}$ and we write it for $n^Y_x$. 
\begin{Lem}\label{LemA01}
%We assume that $x\in I$ is regular for itself %and not a holding point 
%for $X$. 
For $y\in h_S(I)$ which is not a trap and non-negative functional $F$ on $U$, we have 
\begin{align}
n_{y}^Y\sbra{ F(\{ Y_{t}  : t\geq 0 \})  }=
\frac{1}{H(y)}n_{h_S^{-1}(y)}\sbra{ F(\{ h_S (X_{\tau(t)}) : t\geq 0 \})  } .\label{10}
\end{align}
\end{Lem}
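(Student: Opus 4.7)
The plan is to establish Lemma \ref{LemA01} by treating separately the two cases of Assumption \ref{Cond202}, since $n_x$ has an entirely different description in each case. The key link in both cases is that the space-time change $t \mapsto \tau(t)$, $x \mapsto h_S(x)$ puts excursions of $X$ away from $h_S^{-1}(y)$ in natural bijection with excursions of $Y$ away from $y$, while the local times are related by $L^{Y,y}_{A(t)} = H(y)\, L^{h_S^{-1}(y)}_t$ for $t \in [0, \zeta)$, which follows from \eqref{12} together with $\tau(A(t)) = t$.

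For the irregular case (Assumption \ref{Cond202} (ii)), both excursion measures are explicit multiples of laws: $n_{h_S^{-1}(y)} = \frac{1}{l^{h_S^{-1}(y)}}\, \bP^{h_S^{-1}(y)}_{h_S^{-1}(y)}$ and $n_y^Y = \frac{1}{l^{Y,y}}\, \bP^{Y,y}_y$. From \eqref{14} one reads off $l^{Y,y} = H(y)\, l^{h_S^{-1}(y)}$. Since under $\bP^Y_y = \bP_{h_S^{-1}(y)}$ one has $T^Y_y = A(T_{h_S^{-1}(y)})$ and $\tau \circ A = \mathrm{id}$ on $[0, \zeta)$, the killed process $\{Y_{t \wedge T^Y_y}\}_{t \geq 0}$ coincides with $\{h_S(X_{\tau(t) \wedge T_{h_S^{-1}(y)}})\}_{t \geq 0}$, and \eqref{10} follows by direct substitution into the definitions of $n_y^Y$ and $n_{h_S^{-1}(y)}$.

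For the regular case (Assumption \ref{Cond202} (i)), I would invoke the Poisson point process characterization from \cite[Section III.3]{Blu1992}: the excursions of $X$ away from $h_S^{-1}(y)$, indexed by $L^{h_S^{-1}(y)}$, form a Poisson point process on $(0, \infty) \times U$ with characteristic measure $n_{h_S^{-1}(y)}$. Let $\Phi \colon U \to U$ denote the map that sends an excursion path $e$ to its space-time transform $h_S \circ e \circ \tau^e$, where $\tau^e$ is the internal time change on the excursion interval induced by $A$. Because excursion intervals of $Y$ away from $y$ are precisely the images under $A$ of excursion intervals of $X$ away from $h_S^{-1}(y)$, and because the identity $L^{Y,y}_{A(t)} = H(y) L^{h_S^{-1}(y)}_t$ multiplies the local-time index of each excursion by the constant $H(y)$, standard Poisson-random-measure computations give that the excursion process of $Y$ is a Poisson point process with characteristic measure $\frac{1}{H(y)}\, n_{h_S^{-1}(y)} \circ \Phi^{-1}$. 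Combining Lemma \ref{Lem302} with the uniqueness part of Blumenthal's construction identifies this measure as $n_y^Y$, which is exactly \eqref{10}.

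The main obstacle is the regular case: a careful bookkeeping is needed to verify that the combined pointwise space-time transformation $\Phi$ and the rescaling of the local-time parameter by $H(y)$ transport the Poisson structure in the claimed way, and in particular that $\Phi$ is measurable and defined $n_{h_S^{-1}(y)}$-a.e. on $U$. The irregular case, by contrast, reduces to the identity \eqref{14} for $l^{Y,y}$ already established in the proof of Lemma \ref{Lem302}, so it is essentially a bookkeeping exercise once the killed processes are matched.
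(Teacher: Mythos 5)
Your irregular case is essentially identical to the paper's: both read off $l^{Y,y}=H(y)\,l^{h_S^{-1}(y)}$ from \eqref{14}, match the killed processes through $\tau\circ A=\mathrm{id}$, and substitute into Assumption \ref{Cond202} (ii). In the regular case your route is genuinely different in implementation, though it rests on the same two ingredients (the Poisson structure of the excursion process and the local-time relation $L^{Y,y}_{A_t}=H(y)L^{h_S^{-1}(y)}_t$ from \eqref{12}). You transport the whole excursion point process of $X$ through the map $(\ell,e)\mapsto(H(y)\ell,\Phi(e))$, observe that the image intensity is $\tfrac{1}{H(y)}\,ds\otimes\bigl(n_{h_S^{-1}(y)}\circ\Phi^{-1}\bigr)$, and identify this with $n^Y_y$ by uniqueness of the characteristic measure in Blumenthal's construction. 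The paper instead localizes: it conditions both excursion measures on the event that the excursion exits an $\varepsilon$-neighbourhood, identifies the conditional laws with the law of the straddling excursion of the actual processes (which are matched pathwise by the space--time change), computes the total masses via $n^Y_y(T^Y(\varepsilon)<\infty)=-\log\bP^Y_y\bigl(L^{Y,y}_{T^Y(\varepsilon)}>1\bigr)$ and \eqref{12} to get the factor $\tfrac{1}{H(y)}$ in \eqref{9}, and then lets $\varepsilon\downarrow 0$. What the paper's localization buys is precisely the point you flag as your main obstacle: it avoids having to verify globally that $\Phi$ is measurable, defined $n_{h_S^{-1}(y)}$-a.e., and transports the Poisson structure as claimed, replacing that bookkeeping by explicit hitting-time computations on the process itself; what your version buys is a single structural argument with no $\varepsilon$-limit and no mass computation. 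Both are sound, but to make your regular case complete you would still need to carry out the a.e.\ definedness and measurability of the internal time change $\tau^e$ on excursion paths and the image-measure computation you defer, whereas the paper's argument closes these points by construction.
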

\begin{proof}
We define, for %$y\in I $ and 
$t\geq 0$,
\begin{align}
g_{h_S^{-1}(y)}(t):= \sup\cbra{s \leq t: X_s=h_S^{-1}(y)}, \qquad &g^Y_{y}(t):= \sup\cbra{s \leq t: Y_s=y},\\
T_{h_S^{-1}(y)}(t):=\inf\cbra{s>t : X_s=h_S^{-1}(y)}, \qquad &T^Y_{y}(t):=\inf\cbra{s>t : Y_s=y}. 
\end{align}
We divide the proof into two cases below. 
\par
(i) We assume that $y\in h_S(I)$ is regular for itself. 
By the property of Poisson point processes and the construction of the excursion point measure in \cite[Section III.3 (d)]{Blu1992}, we have, for $\varepsilon>0$,  
\begin{align}
&n_{y}^Y\sbra{ F(\{ Y_{t}  : t\geq 0 \}) ; T^Y(\varepsilon)<\infty }
\Big{\slash}n_{y}^Y\rbra{T^Y(\varepsilon)<\infty}
\\
=&\bE^Y_{y}\sbra{ F(\{ Y_{\beta^Y(y, \varepsilon, t) }  : t\geq 0 \}) \Big{|} T^Y(\varepsilon)<\infty}%\Big{\slash}\bP^Y_{h_S(x)}\rbra{T^Y(\varepsilon)<\infty}
\\
=&\bE_{h_S^{-1}(y)}\sbra{ F(\{ h_S (X_{\beta(y, \varepsilon, t)  })  : t\geq %A_
0 \}) \Big{|} T(\varepsilon)<\infty}%\Big{\slash}\bP_x \rbra{T(\varepsilon)<\infty}
\\
=&n_{h_S^{-1}(y)}\sbra{ F(\{ h_S (X_{\tau(t) })  : t\geq 0\}) ; T(\varepsilon)<\infty}\Big{\slash}n_{h_S^{-1}(y)}\rbra{T(\varepsilon)<\infty},\label{8}
\end{align}
where %$\zeta^Y:=\inf\{t>0: Y_t = \partial\}$, 
$T^Y(\varepsilon):= T^{Y,-}_{y-\varepsilon}\land T^{Y,+}_{y+\varepsilon}$, 
$\beta^Y(y, \varepsilon, t):=\rbra{t+g^Y_{y} (T^Y(\varepsilon))}\land T_{y}^Y(T^Y(\varepsilon))$, 
$T(\varepsilon):= T^{-}_{h_S^{-1}(y-\varepsilon)}\land T^+_{h_S^{-1}(y+\varepsilon)}$ and 
$\beta(y, \varepsilon, t):=\tau\Big{(}t+ \tau^{-1}({g_{h_S^{-1}(y)}(T(\varepsilon))})\Big{)}\land T_{h_S^{-1}(y)}(T(\varepsilon))$. 
We also have
\begin{align}
n_{y}^Y\rbra{T^Y(\varepsilon)<\infty}=
-\log \bP^Y_{y}\rbra{L^{Y,x}_{T^Y(\varepsilon)} >1}
=&-\log \bP_{h_S^{-1}(y)}\rbra{H(y)L^{h_S^{-1}(y)}_{T(\varepsilon)} >1}\\
=&\frac{1}{H(y)}n_{h_S^{-1}(y)}\rbra{T(\varepsilon)<\infty}, \label{9}
\end{align}
where in the second equality, we used \eqref{12}. 
From \eqref{8} and \eqref{9}, we have, for $\varepsilon>0$ and non-negative measurable functional $F$, 
\begin{align}
&n_{y}^Y\sbra{ F(\{ Y_{t}  : t\geq 0 \}) ; T^Y(\varepsilon)<\infty }\\
=&\frac{n_{y}^Y\rbra{T^Y(\varepsilon)<\infty}}{n_{h_S^{-1}(y)} \rbra{T(\varepsilon)<\infty}}
n_{h_S^{-1}(y)}\sbra{ F(\{ h_S (X_{\tau(t) })  : t\geq 0\}) ; T(\varepsilon)<\infty}\\
=&\frac{1}{H(y)}
n_{h_S^{-1}(y)}\sbra{ F(\{ h_S (X_{\tau(t)})  : t\geq 0\}) ; T(\varepsilon)<\infty}, 
\end{align} 
and thus by taking the limit as $\varepsilon\downarrow 0$, we obtain \eqref{10}. 
\par 
(ii) We assume that $x\in h_S(I)$ is irregular for itself. 
By %Lemma \ref{Lem302}, %and \eqref{6}, we have 
%\begin{align}
%L^{Y, x}_t= H(x)  L^{h_S^{-1}(x)}_{\tau(t)}
%=&H(x) l^{h_S^{-1}(x)} \sharp \{s \in [0, \tau(t)): X_s={h_S^{-1}(x)}\}\\
%=&H(x) l^{h_S^{-1}(x)} \sharp \{s \in [0, t): Y_s =x\}. 
%\end{align}
%Thus, by Assumption \ref{Cond202}, we have 
Assumption \ref{Cond202} (ii) and \eqref{14}, we have
\begin{align}
n^Y_y \sbra{ F(\{ Y_{t}  : t\geq 0 \})  }&= \frac{1}{H(y) l^{h_S^{-1}(x)}} \bP^{Y}_y\sbra{ F(\{Y_{t\land T^Y_y}: t\geq 0\})  }
\\
&=\frac{1}{H(y) l^{h_S^{-1}(y)}} \bP_{h_S^{-1}(y)}\sbra{ F(\{X_{\tau(t)\land T_{h_S^{-1}(y)}}: t\geq 0\})  }\\
&=\frac{1}{H(y)}n_{h_S^{-1}(y)}\sbra{ F(\{ h_S (X_{\tau(t)}) : t\geq 0 \})  }
. \label{7}
\end{align}
%where $\bP^{Y, x}_x$ the low of $\{Y_{t\land T^Y_x}: t\geq 0\}$ under $\bP^Y_x$ and $T^Y_x:=\inf\{t>0: Y_t=x\}$. 
\par
The proof is complete. 
\end{proof}

\begin{proof}[Proof of Theorem \ref{Thm301}]
%For $x\in h_S(I)$, we define excursion measure $n^Y_x$ of $Y$ away from $x$ associated with $L^{Y, x}$ as Section \ref{Sec002} and 
%We define the scale functions ${\{W^{(q)}_Y\}}_{q\geq 0}$ of $Y$ as \eqref{3}. Then f
For $q\geq 0$ and $a \in h_S(I)$, from Theorem \ref{Thm204}, the function $y\mapsto W^{(q)}_Y(a, y)$ %on $h_S(I)$ %with $a\in h_S(I)$ 
is an unique solution of the Volterra integral equation
\begin{align}
f(y)=W_Y(a, y)+q \int_{(x, a)} f(u) W_Y(y^\prime, y) m_Y(dy^\prime), \qquad y \in h_S(I), \label{5}
\end{align}
where $W_Y:=W_Y^{(0)}$. %for any $b\in h_S(I)$ with $b<a$. 
%It suffices to show that \eqref{4} and \eqref{5} are exactly the same.
%\par
By Lemma \ref{LemA01}, we have, for $y, y^\prime\in h_S(I)$ with $y\leq y^\prime$, 
\begin{align}
W_Y( y^\prime, y)=\frac{1}{n^Y_{y}\sbra{T_{y^\prime}^{Y,+} <\infty}}
%=\frac{h_T(h_S^{-1}(y)) l^{h_S^{-1}(y)}}{\bP^{Y, y}_y\sbra{T_x^{Y,+} <\infty}}
%=\frac{h_T(h_S^{-1}(y)) l^{h_S^{-1}(y)}}{\bP^{h_S^{-1}y}_{h_S^{-1}(y)}\sbra{T_{h_S^{-1}(x)}^{+} <\infty}}
=&\frac{H(y)}{n_{h_S^{-1}(y)}\sbra{T_{h_S^{-1}(y^\prime)}^{+} <\infty}}\\
=&H(y) W_X(h_S^{-1}(y^\prime), h_S^{-1}(y)). \label{11}
\end{align} 
By \eqref{11}, %and Remark \ref{Rem203}, 
the equation \eqref{5} is equivalent to the equation \eqref{4} and the proof is complete. 
\end{proof}

\section{Applications of Theorem \ref{Thm301}}\label{Sec004}
In this section, we give characterizations of the scale functions of positive or negative self-similar Markov processes with no positive jumps and continuous-state branching processes using the Volterra integral equations with the scale functions of spectrally negative L\'evy processes.

\subsection{Preliminaries: the scale functions of spectrally negative L\'evy processes}\label{Sec401}
In this section, we recall some facts about the scale functions of spectrally negative L\'evy processes. 
For the detail, see, e.g., \cite{KuzKypRiv2012} or \cite[Section 8]{Kyp2014}.
\par
Let $X$ be a spectrally negative L\'evy process. We define a function $\psi:[0, \infty) \to \bR$ as 
\begin{align}
\psi (\lambda):=\log \bE_0 \sbra{e^{\lambda X_1}}, \qquad \lambda \geq0. 
\end{align}
The function $\psi$ is called the Laplace exponent of $X$ and has the following form
\begin{align}
\psi (\lambda)= a\lambda +\frac{1}{2}\sigma^2 \lambda^2 +\int_{(-\infty, 0)}\rbra{e^{\lambda x}-1-\lambda x 1_{\{x>-1\}}}\Pi (dx)
,\quad \lambda \geq 0 ,
\end{align}
where $a\in\bR$, $\sigma\geq 0$ and $\Pi$ is a measure on $(-\infty , 0)$ which satisfies $\int_{(-\infty , 0)}(1\land x^2)\Pi(dx)<\infty$. 
Especially, when $X$ has bounded variation paths, i.e., $\sigma=0$ and $\int_{(-\infty , 0)}(1\land |x|)\Pi(dx)<\infty$ hold, 
we can write 
\begin{align}
\psi (\lambda)= \delta\lambda  +\int_{(-\infty, 0)}\rbra{e^{\lambda x}-1}\Pi (dx)
,\quad \lambda \geq 0 ,
\end{align}
with $\delta:=a - \int_{(-1, 0)}x\Pi(dx)$. 
For $q\geq0$, we define a function $W^{(q)}:\bR \to[0, \infty)$ such that $W^{(q)}=0$ on $(-\infty , 0)$, and $W^{(q)}$ on $[0, \infty)$ is continuous and satisfying
\begin{align}
\int_0^\infty e^{-\beta x} W^{(q)}(x) dx =\frac{1}{\psi(\beta)-q},\qquad \beta>\Phi(q), \label{17}
\end{align} 
where $\Phi(q):=\sup\{\lambda \geq 0 :\psi(\lambda)=q\}$. 
The function $W^{(q)}$ is generally called the $q$-scale function of $X$. 
\par
We take the reference measure $m$ of $X$ as the Lebesgue measure. 
We define local times ${\{L^x\}}_{x\in\bR}$ by \cite[Section V]{Ber1996} when $X$ has unbounded variation paths, or by \eqref{6} with $l^x=\frac{1}{\delta}$ when $X$ has bounded variation paths (these local times satisfy \eqref{2}). 
Then, the corresponding excursion measure $n_x$ of $X$ away from $x\in \bR$ satisfies 
\begin{align}
n_x \sbra{1- e^{-qT_x}}=\frac{1}{\Phi^\prime(q)}, \qquad q>0. 
\end{align}
%We write 
Let $W^{(q)}_X$ be the $q$-scale function defined by the excursion measures above and \eqref{204}. 
Then, by \cite[Appendix A]{Nob2020_1}, we have 
\begin{align}
W^{(q)}(x-x^\prime)=W^{(q)}_X(x, x^\prime ),\qquad q\geq 0 , \ x, x^\prime \in \bR. \label{16}
\end{align}
The identity \eqref{16} means that for $q\geq 0$, 
the function $W^{(q)}$ defined by \eqref{17} can be regarded as the $q$-scale function defined by \eqref{204} with respect to the Lebesgue measure.
\par
%In the following, the function $W^{(q)}$ is $q$-scale function of a spectrally negative L\'evy process defined by \eqref{17}.
In the following, we assume that the scale functions of the spectrally negative L\'evy processes are the functions ${\{W^{(q)}\}}_{q\geq 0}$ defined by \eqref{17}.

\subsection{Positive self-similar Markov processes with no positive jumps}\label{Sec402a}
Let $Y$ be a $(0, \infty)$-valued standard process. We assume that $Y$ satisfies the self-similarity, i.e., there exists a constant $\alpha>0$ such that for $x>0$ and $c>0$, the law of $\{cY_{c^{-\alpha}t}: t\geq 0\}$ under $\bP^Y_x$ is the same as  the law of $\{Y_{t}: t\geq 0\}$ under $\bP^Y_{cx}$. 
Then the process $Y$ is called a positive self-similar Markov process with index of self-similarity $\alpha$. 
We set $h_S(x)=e^x$ and $h_T(x)=e^{\alpha x}$ for $x\in\bR$. 
Then, by \cite[Theorem 13.1]{Kyp2014}, there exists a L\'evy process $X$ killed at an independent and exponentially distributed random time ${\bf{e}}_r$ with parameter $r\geq [0, \infty)$ such that the space-time changed process of $X$ as in Section \ref{SubSec203} is equivalent to $Y$. Here, we assume that ${\bf{e}}_0=\infty$ $\bP$-a.s.. 
\par
We assume that $Y$ has no positive jumps and no monotone paths, then the corresponding killed L\'evy process $X$ also has the same properties. 
We define the reference measure $m$ as the Lebesgue measure. 
Let $W^{(r)}$ denotes the $r$-scale function of $X^\prime$ which is $X$ without killing at ${\bf{e}}_r$. 
Note that we can regard $W^{(r)}$ as the $0$-scale function of $X$ %, which is killed at ${\bf{e}}_r$, 
%when the reference measure $m$ is the Lebesgue measure 
since when we define the local times of $X$ and $X^\prime$ as in Section \ref{Sec401}, 
the $0$-scale function $W_{X}$ of $X$ in \eqref{3} satisfies $W_{X}=W^{(r)}_{X^\prime}$ for the $r$-scale functions $W^{(r)}_{X^\prime}$ of $X^\prime$ in \eqref{3} by the definition of the scale functions in Section \ref{Pre}. 
We fix a strictly positive measurable function $h_D$ on $( 0, \infty)$ and define $m_Y$ as the measure on $(0, \infty)$ such that $m_Y(B)=\int_Bh_D(y)m(h^{-1}_S(dy))= \int_B \frac{h_D(y)}{y} dy$ for $B\in\cB((0, \infty))$.  
We define the scale functions ${\{W_Y^{(q)}\}}_{q\geq 0 }$ in the same way as them in Section \ref{Sec003} from the local times of $X$ %in Section \ref{Sec401} 
and the reference measure $m_Y$ on $(0, \infty)$. %such that $m_Y(B)=\int_B \frac{h_D(y)}{y} dy$ for $B\in\cB((0, \infty))$ where $h_D$ is a strictly positive measurable function on $(0, \infty)$. 
From Theorem \ref{Thm301}, %we obtain the following theorem. 
%\begin{Thm}
%Let $m_Y$ be the measure on $(0, \infty)$ such that $m_Y(A)=\int_A \frac{h_D(y)}{y} dy$ for $A\in\cB((0, \infty))$ where $h_D$ is a strictly positive measurable function on $(0, \infty)$. 
for $q\geq 0$ and $a \in (0, \infty)$, the function $y \mapsto W^{(q)}_Y(a, y)$ on $(0, \infty)$ is an unique solution of the Volterra integral equation
\begin{align}
\begin{aligned}
f(y)=\frac{y^\alpha}{h_D(y)}&W^{(r)}( \log(a)-\log(y))\\
&+ \frac{y^\alpha}{h_D(y)} q\int_{(y, a)} f(y^\prime) W^{(r)}(\log (y^\prime)-\log (y))  \frac{h_D(y^\prime)}{y^\prime}dy^\prime, 
 \end{aligned}
 \quad y \in (0, \infty) .
 \label{15}
\end{align}
%Especially, if we set $h_D(y)=y$ for $y\in(0,\infty)$ and $m_Y$ is the Lebesgue measure, then for $a\in(0, \infty)$, the function $y \mapsto W^{(q)}_Y(a, y)$ on $(0, \infty)$ is %associated with the Lebesgue measure is given as an unique solution of the Volterra integral equation 
%\begin{align}
%\begin{aligned}
%f(y)={y^{\alpha-1}}W^{(r)}(\log(a)&-\log(y))
%\\
%&+ y^{\alpha-1} q\int_{(y, a)} f(y^\prime) W^{(r)}(\log (y^\prime)-\log (y))dy^\prime,
%\end{aligned}
%\qquad  x\in(0, \infty).
%\end{align}
In particular, by letting $h_D(y)={y}$ for $y\in (0, \infty)$, 
the unique solution of \eqref{15} represent 
the scale functions ${\{W_Y^{(q)}\}}_{q\geq 0 }$ of $Y$ when the reference measure $m_Y(dy)=\frac{h_D(y)}{y}dy$ is the Lebesgue measure. 
%\end{Thm}
\subsection{Negative self-similar Markov processes with no positive jumps}
We also write for the cases where $Y$ has the self-similarity and takes negative values. 
Let $Y$ be $(-\infty, 0)$-valued standard process with no positive jumps which has no monotone paths and satisfies the self-similarity with index of self-similarity $\alpha>0$. 
Then the process $Y$ is called a negative self-similar Markov process with index of self-similarity $\alpha$. 
In this case as well, by setting $h_S(x)=-e^{-x}$ and $h_T(x)=e^{-\alpha x}$, and by using \cite[Theorem 13.1]{Kyp2014}, $Y$ can be expressed as the space-time changed process in Section \ref{SubSec203} of a spectrally negative L\'evy process $X$ killed at an independent and exponentially distributed random time ${\bf{e}}_r$ with parameter $r\geq [0, \infty)$. 
We define the reference measure $m$ as the Lebesgue measure and the local times of $X$ as those for spectrally negative L\'evy processes in Section \ref{Sec401}. 
We write $W^{(r)}$ for the $r$-scale function of %$X^\prime$ which is 
$X$ 
without killing with ${\bf{e}}_r$. 
Then, as in the cases of Section \ref{Sec402a}, the function $W^{(r)}$ can be regarded as the $0$-scale function of $X$. 
We fix a strictly positive measurable function $h_D$ on $(-\infty, 0)$ and define $m_Y$ as the measure on $(-\infty, 0)$ such that $m_Y(B)=\int_Bh_D(y)m(h^{-1}_S(dy))=\int_B \frac{h_D(y)}{-y} dy$ for $B\in\cB((-\infty, 0))$.  %, then by Theorem \ref{Thm301}. 
We define the scale functions ${\{W^{(q)}_Y\}}_{q\geq 0}$ for $Y$ in the same way as for the cases of Section \ref{Sec402a}. %where $Y$ takes positive values. 
Then by Theorem \ref{Thm301}, for $q\geq 0$ and $a \in (-\infty, 0)$, the function $y\mapsto W^{(q)}_Y(a, y)$ on $(-\infty , 0)$ %associated with the measure $m_Y$ 
is an unique solution of the Volterra integral equation 
\begin{align}
\begin{aligned}
f(y)=&\frac{{(-y)}^\alpha}{h_D(y)}W^{(r)}(\log(-y)-\log(-a))\\
&+ \frac{{(-y)}^\alpha}{h_D(y)} q\int_{(y, a)} f(y^\prime) W^{(r)}(\log (-y)-\log (-y^\prime))\frac{h_d(y^\prime)}{-y^\prime}dy^\prime,
\end{aligned}
\quad
y^\prime\in (-\infty , 0). 
\label{18}
\end{align}
In particular, by letting $h_D(y)=-{y}$ for $y\in ( -\infty, 0)$, 
the unique solution of \eqref{18} represent 
the scale functions ${\{W_Y^{(q)}\}}_{q\geq 0 }$ of $Y$ when the reference measure $m_Y(dy)=\frac{h_D(y)}{-y}dy$ is the Lebesgue measure. 
%we can express the scale functions of $X$ when the reference measure $m_Y$ is the Lebesgue measure.

\subsection{Continuous-state branching processes}\label{Sec402}
%The main result in this section is what Dr. Kosuke Yamato already got before I got it. 
%\begin{Rem}
I note that, in special cases, the results written in this section may not be entirely new by Remark \ref{Rem402}. 
\par
A $[0, \infty)$-valued standard process $Y$ %with no negative jumps 
which satisfies the branching property, i.e., %$Y$ satisfies 
\begin{align}
\bE^Y_{y+y^\prime}\sbra{e^{-\lambda Y_t}}=\bE^Y_{y}\sbra{e^{-\lambda Y_t}}\bE^Y_{y^\prime}\sbra{e^{-\lambda Y_t}}, \qquad
\lambda\geq0 , \  y,y^\prime \geq 0 , \ t\geq 0 ,
\end{align}
(for another definition of the branching property, see, e.g., \cite[Definition 1.14]{Kyp2014}), %or \cite[(12.1)]{Kyp2014}) 
is known a continuous-state branching processes. 
Here, we assume that $0$ is the cemetery point of $Y$. 
Then, the process $-Y$ can be represented by the time change in Section \ref{SubSec203} of a spectrally negative L\'evy process $X$ killed on exiting $(-\infty , 0)$ with $h_T(x)=-\frac{1}{x}$ for $x\in(-\infty , 0)$ by \cite[Theorem 12.2]{Kyp2014} (the space change is not necessary, so we assume that $h_S(x)=x$ for $x\in(-\infty, 0)$). 
We define the reference measure $m$ as the Lebesgue measure and the local times of $X$ as those for spectrally negative L\'evy processes in Section \ref{Sec401}. 
We write $W$ for the $0$-scale function of $X$ without killed on exiting $(-\infty , 0)$. 
By the same argument as %in the positive self-similar case
for the cases of Section \ref{Sec402a}, %where $Y$ is a positive self-similar Markov process with no positive jumps, 
the function $W$ can be regarded as a $0$-scale function of $X$. 
We fix a strictly positive measurable function $h_D$ on $(-\infty, 0)$ and define $m_Y$ as the measure on $(-\infty, 0)$ such that $m_Y(B)=\int_B h_D(y) m(dy)=\int_B h_D(y) dy$ for $B\in\cB((-\infty, 0))$.  
%We write $W$ for the $0$-scale function of $X$ without killed on exiting $(-\infty , 0)$. %before killing with ${\bf{e}}_r$, then by Theorem \ref{Thm301}. 
%Note that the local times of $X$ can be defined in the same way as those of $X$ without killed on exiting $(-\infty , 0)$ in Section \ref{Sec401} when the reference measure $m$ is the Lebesgue measure, and the $0$-scale function of $X$ in Section \ref{Sec202} can also be defined to satisfy \eqref{16} with $q=0$ for $x, x^\prime \in (-\infty , 0)$. 
%Note that we can regard $W$ as the $0$-scale function of $X$, which is killed on exiting $(-\infty , 0)$, when the reference measure $m$ is the Lebesgue measure by the definition of the scale functions in Section \ref{Pre}. 
We define the scale functions ${\{W^{(q)}_Y\}}_{q\geq 0}$ for $Y$ in the same way as in Section \ref{Sec402a}. %the positive self-similar cases. %for the cases where $Y$ is a positive self-similar Markov process with no positive jumps. 
%Thus by applying Theorem \ref{Thm301}, for $q\geq 0$, the $q$-scale function $W^{(q)}_{-Y}(a, \cdot)$ with $a\in(-\infty , 0)$ of $-Y$ associated with the reference measure $h_D(y)dy$ is an unique solution of the Volterra integral equations
Then by Theorem \ref{Thm301}, for $q\geq 0$ and $a \in (-\infty, 0)$, the function $y\mapsto W^{(q)}_Y(a, y)$ on $(-\infty , 0)$ 
is an unique solution of the Volterra integral equation 
\begin{align}
\begin{aligned}
f(y)=-\frac{1}{yh_D(y)}&W(a-y)\\
&-\frac{1}{yh_D(y)}q\int_{(y, a)} f(y^\prime) W(y^\prime-y)  h_D(y^\prime) dy^\prime, 
\end{aligned}
\qquad 
& y \in (-\infty, 0). \label{19}
\end{align}
%where $W$ is the $0$-scale function of $X$. %and $\frac{0}{0}=0$. 
In particular, by letting $h_D(y)=1$ for $y\in ( -\infty, 0)$, 
the unique solution of \eqref{19} represent 
the scale functions ${\{W_Y^{(q)}\}}_{q\geq 0 }$ of $Y$ when the reference measure $m_Y(dy)= h_D(y) dy$ is the Lebesgue measure. 
%we can express the scale functions of $X$ when the reference measure $m_Y$ is the Lebesgue measure.
\begin{Rem}\label{Rem402}
Before I %obtained %Theorem \ref{Thm301}, 
started this work, 
Dr. Kosuke Yamato mentioned that if the Laplace exponent of the corresponding spectrally negative L\'evy process satisfies $\psi^\prime(0+)>0$, the characterization of the scale functions of $-Y$ could probably be obtained 
by using \cite[Proposition 3.3 and Theorem 3.5]{NobYam2023+}, the facts that the $0$-scale functions of a continuous-state branching process and the corresponding spectrally negative L\'evy process are equal 
up to multiplication by a constant, and the comparison quasi-stationary distributions \cite[Theorem 5.2.]{NobYam2023+} and \cite[Theorem 3.1]{Lam2007}. 
I also think that the above formula can be obtained using this method. 
However, %the paper has not been written on this idea, 
this idea is not mentioned in any paper, 
and after discussing with him, I wrote about it here. %in this paper. 
\end{Rem}
%Therefore, %the novelty of this result is very weak%this result is not novel 
%and here we only mention that we can apply %also obtain the characterization of the scale functions of $-Y$ using 
%Theorem \ref{Thm301}. 
%it should be noted that the results are not completely novel. 

%However, since this result can easily be obtained using the main result of this paper, I write it in this section.
%\end{Rem}
\section*{Acknowledgments}
The auther was supported by JSPS KAKENHI grant no. JP21K13807 and JSPS Open Partnership Joint Research Projects grant no. JPJSBP120209921. In addition, the author stayed at Centro de Investigaci\'on en Matem\'aticas in Mexico as a JSPS Overseas Research Fellow and received support regarding the research environment there. The author was grateful for their support during his visit. 
I am grateful to Dr. Kosuke Yamato for allowing me to write the results concerning continuous-state branching processes in Section \ref{Sec402}. 
I am also grateful to Prof. Kouji Yano for giving me some advice on the structure of this paper.

%\appendix
%\section{The excursion measures of space-time changed processes}
%We give the same setting as Section \ref{Sec003}. 
%Note that If $x\in I$ is regular for itself for $X$, then so is $h_S(x)\in h_S(I)$ for $Y$. 
%Then, we have the following proposition.
%We have the following lemma about the normalization of the excursion measures.

%\section{Some auxiliary lemmas}
%In this section, we give the simple lemmas and their proofs used in the main part. 
%\begin{Lem}
%Let $Y$ be the standard process defined in Section \ref{SubSec203}. 
%Then, 
%\end{Lem}

%%%%% references %%%%%
% 文献情報はMathSciNetから取得するのが便利
% http://www.ams.org/mathscinet/

\bibliographystyle{jplain}
\bibliography{NOBA_references_05}

\end{document}